\documentclass[11pt,a4paper]{article}
\usepackage[margin=25mm]{geometry}
\usepackage{amsmath,amssymb,amsthm,booktabs,array,longtable}
\usepackage[T1]{fontenc}
\usepackage{mathptmx} 
\usepackage[scaled=0.92]{helvet}
\usepackage{courier}
\usepackage[hidelinks]{hyperref}
\newtheorem{theorem}{Theorem}[section]
\newtheorem{lemma}[theorem]{Lemma}
\newtheorem{definition}[theorem]{Definition}
\newcommand{\Ric}{\operatorname{Ric}}
\newcommand{\Der}{\operatorname{Der}}
\newcommand{\Id}{\operatorname{Id}}
\newcommand{\diag}{\operatorname{diag}}
\newcommand{\spn}{\operatorname{span}}
\newcommand{\nB}{\nabla^{B}}
\newcommand{\nL}{\nabla^{L}}
\newcommand{\RB}{R^{B}}
\newcommand{\LB}{\mathcal{L}^{B}}
\title{Algebraic and Symmetrized Affine $\ast$-Ricci Solitons for the Bott Connection on Three-Dimensional Lorentzian Lie Groups}
\author{YUXIN DING}\date{}
\begin{document}\maketitle
\begin{abstract}
We fix a spacelike two-dimensional left-invariant distribution and its associated almost contact structure on a three-dimensional Lorentzian Lie group, and investigate the star-Ricci contraction of the curvature of the Bott connection. We compute the complete tensor for seven standard bracket families and determine all real parameter solutions of the general algebraic equation and its self-adjoint subclass. Using the symmetric part of the tensor, we also define a Bott affine star-Ricci soliton equation and classify all left-invariant vector field solutions. Every affine solution is steady, while two non-unimodular families admit branches with a nonzero symmetrized tensor. The proofs provide complete component equations, treat all degenerate parameter branches, and establish sufficiency by explicit substitution. We also distinguish almost contact structures from contact metric structures.
\end{abstract}
\noindent\textbf{Keywords:} Lorentzian Lie group; Bott connection; star-Ricci tensor; derivation; affine soliton.

\section{Introduction}
Soliton problems in three-dimensional left-invariant Lorentzian geometry can be reduced, through the structure constants, to finite-dimensional algebraic systems. Curvature contractions depend on the connection under consideration, however, so a classification of solitons for the Levi--Civita connection cannot be transferred directly to the Bott connection. Wu and Wang studied the Bott connection and associated affine Ricci solitons on three-dimensional Lorentzian Lie groups \cite{WuWang}. Zhu and Ma investigated algebraic star-Ricci solitons on three-dimensional Lorentzian almost contact Lie groups \cite{ZhuMa}. In the present paper, we fix a spacelike two-dimensional distribution and its associated almost contact structure, and study the star-Ricci contraction of Bott curvature.

The Bott connection is generally not metric-compatible, and its star-Ricci tensor need not be symmetric. Consequently, two steps that are sometimes conflated must be treated separately. A nondegenerate metric defines the metric-dual endomorphism of any bilinear tensor without requiring symmetry. By contrast, when a symmetric soliton equation is constructed using an affine metric deformation term, the symmetric part of the curvature tensor is the natural quantity to use. We therefore study three distinct problems: the general algebraic equation, its self-adjoint algebraic subclass, and the Bott affine equation defined by the symmetrized tensor. All definitions are stated explicitly below, independently of the conventions adopted in other references.

Our main results are as follows. First, the complete star-Ricci tensors for the seven standard bracket families can be expressed uniformly through three scalars $K,U,W$, and the nonsymmetric part is determined entirely by $U,W$. Second, we determine every real parameter branch satisfying $Q=c\Id+D$ and identify the branches excluded by self-adjointness. Third, we classify all left-invariant vector field solutions of the symmetrized Bott affine equation. Every affine solution is steady, although $G_6,G_7$ admit solutions with nonzero symmetrized star-Ricci tensors. The proofs give the complete component equations, resolve all zero-parameter cases, and verify every surviving branch by substitution.

Throughout, $H=\spn\{e_1,e_2\}$ is fixed; the choice of distribution is not part of the classification. The seven labels refer to standard parameterized bracket families, not to only seven abstract Lie groups. The parameter lists are not moduli spaces obtained by a further quotient by all metric isomorphisms or scaling equivalences. All affine solitons considered here have left-invariant vector fields.

\section{Basic Definitions and Distinct Soliton Problems}
\subsection{Precise scope of the investigation}
Fix a pseudo-orthonormal basis and the almost contact structure
\[
g=\diag(1,1,-1),\quad\phi e_1=e_2,\quad\phi e_2=-e_1,\quad\phi e_3=0,\quad\xi=e_3,\quad\eta=-g(\,\cdot\,,e_3).
\]
We consider left-invariant structures on connected, simply connected Lie groups, with the above $\phi$ and distribution fixed. The seven families are standard bracket parameter families, not seven abstract groups. The results do not classify all possible distributions or almost contact structures. Write $\varepsilon=\pm1$ for the discrete parameter of $G_4$, to distinguish it from the one-form $\eta$. We use $D$ for a derivation and $H$ for the distribution.

The curvature convention is always
\[
\RB(X,Y)Z=\nB_X\nB_YZ-\nB_Y\nB_XZ-\nB_{[X,Y]}Z.
\]
Let $\epsilon_1=\epsilon_2=1$, $\epsilon_3=-1$ denote the metric signs. We define
\begin{equation}
T(X,Y):=\Ric^*(X,Y)=\frac12\sum_{i=1}^{3}\epsilon_i g\bigl(\RB(X,\phi Y)\phi e_i,e_i\bigr).
\label{eq:star}
\end{equation}
This is a signed metric contraction. Since $\phi e_3=0$ in the present adapted basis, only the first two terms contribute. The metric signs must nevertheless be retained for a general pseudo-orthonormal basis.

\begin{definition}
Let $T=\Ric^*$ be defined by the Bott curvature contraction above, and let $Q$ be the unique endomorphism satisfying $g(QX,Y)=T(X,Y)$. The fixed structure satisfies the \emph{general algebraic star-Ricci soliton equation} if there exist $c\in\mathbb R$ and $D\in\Der(\mathfrak g)$ such that
\[
Q=c\Id+D,\qquad D[X,Y]=[DX,Y]+[X,DY].
\]
If $T(X,Y)=T(Y,X)$ also holds, the solution belongs to the \emph{self-adjoint algebraic subclass}.
\end{definition}
Self-adjointness is with respect to the Lorentzian metric and cannot be replaced by ordinary matrix symmetry. The self-adjoint subclass and general class are stated separately below to make their scopes explicit.

\begin{definition}
Set $S=(T+T^{\mathsf T})/2$, where $T^{\mathsf T}(X,Y)=T(Y,X)$. A quadruple $(g,\phi,V,\lambda)$ is a \emph{symmetrized Bott affine star-Ricci soliton} if $V$ is left-invariant, $\lambda\in\mathbb R$, and
\[
g(\nB_XV,Y)+g(X,\nB_YV)+2S(X,Y)=2\lambda g(X,Y).
\]
\end{definition}
Both algebraic endomorphisms are defined from the original tensor $T$, whereas the affine equation uses $S$. Substituting the metric dual of $S$ into the algebraic equation would produce a different problem, which we do not use in place of the definitions above. We call a general algebraic solution expanding, steady, or shrinking according as $c<0$, $c=0$, or $c>0$. The affine sign convention causes no ambiguity, since all affine solutions have $\lambda=0$.

\section{The Seven Parameter Families and a Unified Computation}
\subsection{Lie brackets and parameter restrictions}
All parameters are real, and undisplayed brackets are determined by skew-symmetry.
\begin{align*}
G_1:\quad &[e_1,e_2]=\alpha e_1-\beta e_3,\quad [e_1,e_3]=-\alpha e_1-\beta e_2,\\
&[e_2,e_3]=\beta e_1+\alpha e_2+\alpha e_3,\qquad\alpha\ne0;\\[3pt]
G_2:\quad &[e_1,e_2]=\gamma e_2-\beta e_3,\quad[e_1,e_3]=-\beta e_2-\gamma e_3,\\
&[e_2,e_3]=\alpha e_1,\qquad\gamma\ne0;\\[3pt]
G_3:\quad &[e_1,e_2]=-\gamma e_3,\quad[e_1,e_3]=-\beta e_2,\quad[e_2,e_3]=\alpha e_1;\\[3pt]
G_4:\quad &[e_1,e_2]=-e_2+(2\varepsilon-\beta)e_3,\quad[e_1,e_3]=-\beta e_2+e_3,\\
&[e_2,e_3]=\alpha e_1,\qquad\varepsilon=\pm1;\\[3pt]
G_5:\quad &[e_1,e_2]=0,\quad[e_1,e_3]=\alpha e_1+\beta e_2,\quad[e_2,e_3]=\gamma e_1+\delta e_2,\\
&\alpha+\delta\ne0,\qquad\alpha\gamma+\beta\delta=0;\\[3pt]
G_6:\quad &[e_1,e_2]=\alpha e_2+\beta e_3,\quad[e_1,e_3]=\gamma e_2+\delta e_3,\quad[e_2,e_3]=0,\\
&\alpha+\delta\ne0,\qquad\alpha\gamma-\beta\delta=0;\\[3pt]
G_7:\quad &[e_1,e_2]=-\alpha e_1-\beta e_2-\beta e_3,\quad[e_1,e_3]=\alpha e_1+\beta e_2+\beta e_3,\\
&[e_2,e_3]=\gamma e_1+\delta e_2+\delta e_3,\qquad\alpha+\delta\ne0,\quad\alpha\gamma=0.
\end{align*}
\subsection{Generating the connection and curvature directly from the brackets}
Write $[e_i,e_j]=\sum_k C_{ij}^{k}e_k$ and let the $j$th column of $A_i$ represent $\nB_{e_i}e_j$. The Koszul formula reads
\[
2g(\nL_{e_i}e_j,e_k)=C_{ij}^{k}\epsilon_k-C_{jk}^{i}\epsilon_i+C_{ki}^{j}\epsilon_j.
\]
With $\mathcal V=H^\perp$, the four projection rules defining the Bott connection are
\[
\nB_XY=\begin{cases}
\pi_H(\nL_XY),&X,Y\in\Gamma(H),\\
\pi_H([X,Y]),&X\in\Gamma(\mathcal V),\ Y\in\Gamma(H),\\
\pi_{\mathcal V}([X,Y]),&X\in\Gamma(H),\ Y\in\Gamma(\mathcal V),\\
\pi_{\mathcal V}(\nL_XY),&X,Y\in\Gamma(\mathcal V).
\end{cases}
\]
Then
\begin{equation}
R_{ij}=A_iA_j-A_jA_i-\sum_k C_{ij}^{k}A_k.
\label{eq:curvature}
\end{equation}
This avoids treating a quoted curvature table as an unchecked premise. The appendix lists every $A_i$ and the curvature data needed to reproduce the contraction.

\begin{lemma}
Set
\[
\tau_{ij}=\tfrac12\bigl((R_{ij})_{12}-(R_{ij})_{21}\bigr),\qquad
K=\tau_{12},\quad U=-\tau_{23},\quad W=\tau_{13}.
\]
For all seven families, the star-Ricci tensor and its metric-dual endomorphism are
\begin{equation}
[T_{ij}]=\begin{pmatrix}K&0&0\\0&K&0\\U&W&0\end{pmatrix},\qquad
[Q]=\begin{pmatrix}K&0&U\\0&K&W\\0&0&0\end{pmatrix}.
\label{eq:unified}
\end{equation}
Columns of the endomorphism matrix represent images of basis vectors, consistent with $g(QX,Y)=T(X,Y)$. In particular, $T$ is symmetric exactly when $U=W=0$.
\end{lemma}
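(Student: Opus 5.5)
The plan is to prove the lemma structurally, for an arbitrary left-invariant connection, so that no family-by-family computation is needed. The seven families enter only through the numerical values of $K,U,W$, which the appendix records. The point is that formula \eqref{eq:star} involves only the $H$-block of $\RB(X,\phi Y)$, and $\phi$ acts on the second slot by a fixed rotation.

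First, I will rewrite the contraction. Let $(R)_{kl}$ denote the $e_k$-component of $Re_l$, so the $l$th column of $R$ is the image of $e_l$. Since $\phi e_3=0$, only $i=1,2$ contribute to \eqref{eq:star}. Using $\phi e_1=e_2$, $\phi e_2=-e_1$ and $\epsilon_1=\epsilon_2=1$, the two terms are
\[
g(\RB(X,\phi Y)e_2,e_1)-g(\RB(X,\phi Y)e_1,e_2)=(R)_{12}-(R)_{21},
\]
with $R=\RB(X,\phi Y)$. Define the bilinear form $\tau(X,Y)=\tfrac12\bigl((\RB(X,Y))_{12}-(\RB(X,Y))_{21}\bigr)$. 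On basis vectors it agrees with the $\tau_{ij}$ of the statement, because $\RB(e_i,e_j)$ is the matrix $R_{ij}$ of \eqref{eq:curvature}. This gives $T(X,Y)=\tau(X,\phi Y)$. Moreover, $\tau$ is skew-symmetric because $\RB(X,Y)=-\RB(Y,X)$, so $\tau_{ii}=0$ and $\tau_{ji}=-\tau_{ij}$.

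Second, I will evaluate $T_{ij}=T(e_i,e_j)$ entrywise:
\begin{align*}
&T(e_i,e_3)=\tau(e_i,0)=0 \quad\text{for all } i,\\
&T(e_1,e_1)=\tau_{12}=K,\qquad T(e_2,e_2)=-\tau_{21}=\tau_{12}=K,\\
&T(e_1,e_2)=-\tau_{11}=0,\qquad T(e_2,e_1)=\tau_{22}=0,\\
&T(e_3,e_1)=\tau_{32}=-\tau_{23}=U,\qquad T(e_3,e_2)=-\tau_{31}=\tau_{13}=W.
\end{align*}
This is exactly the matrix $[T_{ij}]$ in \eqref{eq:unified}. Only skew-symmetry of the curvature in its first two arguments was used, so the form holds for all seven families simultaneously.

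Third, I will obtain $Q$ from $g(Qe_j,e_k)=T_{jk}$, which gives $Qe_j=\sum_k\epsilon_kT_{jk}e_k$. Hence $Qe_1=Ke_1$ and $Qe_2=Ke_2$. For $e_3$ we get $Qe_3=Ue_1+We_2-T_{33}e_3=Ue_1+We_2$, since $T_{33}=0$. Writing these images as columns gives $[Q]$. Finally, $T$ is symmetric exactly when $T_{31}=T_{13}$ and $T_{32}=T_{23}$. Since $T_{13}=T_{23}=0$, this reduces to $U=W=0$.

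The main obstacle is keeping the index conventions consistent: the row/column convention for $(R_{ij})_{kl}$, the sign produced by $\phi e_2=-e_1$, and the Lorentzian sign $\epsilon_3$ in the dual endomorphism. I would fix these by checking one entry, say $T(e_2,e_2)$, against the definition directly. The block-diagonal form of each $A_i$, which holds because the Bott connection preserves $H$ and $\mathcal V$, is not needed for this lemma. It only becomes relevant when the appendix computes the actual values of $K,U,W$.
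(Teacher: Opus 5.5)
Your proposal is correct and follows the paper's route. You reduce \eqref{eq:star} to $\tfrac12\bigl((R)_{12}-(R)_{21}\bigr)$ with $R=\RB(X,\phi Y)$, evaluate at $Y=e_1,e_2,e_3$, and dualize via $Qe_j=\sum_k\epsilon_kT_{jk}e_k$, i.e.\ $Q=ET^{\mathsf T}$; your explicit use of the skew-symmetry $\tau_{ji}=-\tau_{ij}$ (giving $T_{12}=T_{21}=0$ and $T_{22}=K$) only spells out what the paper's one-line evaluation leaves implicit.
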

\begin{proof}
In this basis, \eqref{eq:star} is
\[
\tfrac12\{g(\RB(X,\phi Y)e_2,e_1)-g(\RB(X,\phi Y)e_1,e_2)\}.
\]
Taking $Y=e_1,e_2,e_3$ yields the three tensor columns. If $E=\diag(1,1,-1)$, then $T=Q^{\mathsf T}E$, so $Q=ET^{\mathsf T}$, giving \eqref{eq:unified}.
\end{proof}
\begin{table}[htbp]\centering
\caption{Complete star-Ricci data. The constraint $\alpha\gamma=0$ is used for $G_7$.}
\begin{tabular}{@{}c c c c@{}}\toprule Family&$K$&$U$&$W$\\\midrule
$G_1$&$-\alpha^2-\beta^2$&$0$&$\alpha^2$\\
$G_2$&$-\gamma^2-\tfrac12\beta(\alpha+\beta)$&$0$&$\tfrac12\gamma(\beta-\alpha)$\\
$G_3$&$-\tfrac12\gamma(\alpha+\beta)$&$0$&$0$\\
$G_4$&$\tfrac12(\alpha+\beta)(2\varepsilon-\beta)-1$&$0$&$\tfrac12(\alpha-\beta)$\\
$G_5$&$0$&$0$&$0$\\
$G_6$&$-\alpha^2-\tfrac12\beta\gamma$&$0$&$-\tfrac12\gamma(2\alpha+\delta)$\\
$G_7$&$-\alpha^2-\tfrac12\beta(\beta+\gamma)$&$-\tfrac12\delta(\beta+\gamma)$&$-K$\\\bottomrule
\end{tabular}\end{table}

\section{Complete Classification of the Self Adjoint Algebraic Subclass}
\begin{theorem}\label{thm:self}
Require $T$ to be symmetric and $Q=c\Id+D$, with $D\in\Der(\mathfrak g)$. The following list is complete. Each item remains subject to the original parameter restrictions; no other cases admit solutions.
\begin{enumerate}
\item $G_1$: no solutions.
\item $G_2$: $\alpha=\beta=0$, $\gamma\ne0$; $c=-\gamma^2$, $D=\diag(0,0,\gamma^2)$.
\item $G_3$: $\gamma(\alpha+\beta)=0$. In the nonabelian case, $c=0$, $D=0$. In the abelian case $\alpha=\beta=\gamma=0$, any $c\in\mathbb R$ is allowed, with $D=-c\Id$.
\item $G_4$: $\alpha=\beta=\varepsilon$; $c=0$, $D=0$.
\item $G_5$: all admissible parameters; $c=0$, $D=0$.
\item $G_6$: precisely two disjoint branches:
\begin{enumerate}
\item $\beta=\gamma=0$, $\alpha+\delta\ne0$; $c=-\alpha^2$, $D=\diag(0,0,\alpha^2)$;
\item $\alpha\ne0$, $\delta=-2\alpha$, $\gamma=2s\alpha$, $\beta=-s\alpha$, $s=\pm1$; $c=0$, $D=0$.
\end{enumerate}
\item $G_7$: $\alpha=0$, $\gamma=-\beta$, $\delta\ne0$; $c=0$, $D=0$.
\end{enumerate}
\end{theorem}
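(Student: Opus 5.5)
By the preceding Lemma, the self-adjointness requirement $T=T^{\mathsf T}$ is equivalent to $U=W=0$. On that locus \eqref{eq:unified} gives $Q=\diag(K,K,0)$. The equation $Q=c\Id+D$ therefore forces
\[
D=\diag(a,a,-c),\qquad a:=K-c .
\]
So the whole problem reduces to one question: for which parameters with $U=W=0$ is this diagonal matrix a derivation?

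For a diagonal $D=\diag(d_1,d_2,d_3)$ the derivation identity $D[e_i,e_j]=[De_i,e_j]+[e_i,De_j]$ splits componentwise into
\[
(d_i+d_j-d_k)\,C_{ij}^k=0\qquad\text{for all } i<j \text{ and all } k .
\]
With $d_1=d_2=a$ and $d_3=-c$ there are only four possible coefficients:
\begin{itemize}
\item $a$, for $C_{12}^{1},C_{12}^{2},C_{13}^{3},C_{23}^{3}$;
\item $2a+c$, for $C_{12}^{3}$;
\item $-c$, for $C_{13}^{1},C_{13}^{2},C_{23}^{1},C_{23}^{2}$;
\item $a-c-a$ patterns, which also reduce to $-c$.
\end{itemize}
Thus every nonzero structure constant forces one of $a=0$, $c=0$ or $2a+c=0$. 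I would tabulate these for each family and combine them with the constraints $U=W=0$ read off the table of $K,U,W$, with $c=K-a$, and with the original parameter restrictions.

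I then run the families in order.
\begin{itemize}
\item $G_1$: here $W=\alpha^2\neq0$, so there are no solutions.
\item $G_2$: $W=0$ with $\gamma\ne0$ gives $\beta=\alpha$. The $\gamma$-terms force $a=0$, hence $c=K=-\gamma^2-\beta^2<0$. The terms $-c\beta$ and $-c\alpha$ then force $\alpha=\beta=0$, which gives the stated $c$ and $D$.
\item $G_3$: $U=W=0$ holds automatically. Each of $\gamma,\beta,\alpha$, if nonzero, forces $2a+c=0$ or $c=0$. In the nonabelian case this yields $c=0$ and $a=0$, so $K=0$, i.e.\ $\gamma(\alpha+\beta)=0$. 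I would check the subcases where only $\gamma\neq0$ separately: there $2a+c=0$ together with $a=K-c$ and $K=-\tfrac12\gamma(\alpha+\beta)$ must still collapse to $c=0$. In the abelian case $c$ is free and $D=-c\Id$.
\item $G_4$: $W=0$ gives $\alpha=\beta$. The coefficients $-1$ and $1$ force $a=0$, and $\alpha$ or $\beta$ then forces $c=0$ unless $\alpha=\beta=0$; but that case gives $C_{12}^3=2\varepsilon\ne0$, which forces $2a+c=0$. In all cases $c=0$ and $K=0$, and $K=\alpha(2\varepsilon-\alpha)-1=-(\alpha-\varepsilon)^2$ yields $\alpha=\varepsilon$.
\item $G_5$: the brackets involve only the $-c$ and $a$ coefficients, with $\alpha+\delta\ne0$. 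This gives $c=0$ and $a=0$, consistent with $K=0$.
\end{itemize}

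The main obstacle is the pair $G_6,G_7$, where several parameters can vanish independently.
\begin{itemize}
\item $G_6$: $W=0$ means $\gamma=0$ or $\delta=-2\alpha$.
\begin{itemize}
\item If $\gamma=0$, the constraint gives $\beta\delta=0$, and I split by $\alpha$ and $\delta$ using the coefficients $a\alpha$, $(2a+c)\beta$ and $a\delta$. The requirement $\alpha+\delta\ne0$ forces $a=0$, so $c=K=-\alpha^2$. Then $\beta\ne0$ would need $c=0$, hence $\alpha=0$ and $\delta=0$ by $\beta\delta=0$, which is excluded. This gives branch (a).
\item If $\gamma\ne0$ and $\delta=-2\alpha$, the $\gamma$-term forces $c=0$ and hence $a=0$, so $K=0$. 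Combining $\alpha^2=-\tfrac12\beta\gamma$ with $\alpha\gamma=\beta\delta=-2\alpha\beta$ gives $\gamma=-2\beta$ (since $\alpha\ne0$, as $\alpha=0$ would force $\delta=0$), and then $\alpha^2=\beta^2$. Writing $\beta=-s\alpha$ gives branch (b).
\end{itemize}
\item $G_7$: $\alpha\gamma=0$. The condition $U=0$ gives $\delta=0$ or $\beta+\gamma=0$, and $W=-K=0$ is needed. The $\beta$- and $\delta$-coefficients in $[e_1,e_3]$ and $[e_2,e_3]$ force $c=0$ and $a=0$, and $K=0$ then forces $\alpha=0$ and $\beta(\beta+\gamma)=0$. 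With $\alpha+\delta\ne0$ this gives $\delta\ne0$, so $U=0$ gives $\gamma=-\beta$.
\end{itemize}
Finally, I would verify every surviving branch by direct substitution into the componentwise derivation equations.
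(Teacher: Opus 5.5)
Your proof is correct, but it is organized differently from the paper's. The paper never treats the self-adjoint problem on its own. It first solves the general equation $Q=c\Id+D$ with the non-diagonal candidate $D$ (entries $U,W$ in the third column), writing out the derivation defects $F_{ij}$ family by family and eliminating parameters, which yields Theorem~\ref{thm:general}. It then reads off Theorem~\ref{thm:self} by discarding the branches with $U\ne0$ or $W\ne0$: the $\beta=0$ branch of $G_1$, the two $\alpha=0$ branches of $G_4$, and the $\alpha\ne0$ branch of $G_7$.

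You impose $U=W=0$ first, so $D=\diag(a,a,-c)$ is diagonal. The weight criterion $(d_i+d_j-d_k)C_{ij}^k=0$ then turns every nonzero structure constant into one of the linear conditions $a=0$, $c=0$, $2a+c=0$. This is shorter and uniform across the families. It avoids the coupled equations in which $W$ enters (for example $c=K+rW$ and $2W-\beta c=0$ for $G_4$), and it makes sufficiency automatic, since the weight criterion is an equivalence for diagonal $D$. What it does not give is any information about non-self-adjoint solutions, which the paper extracts from the same computation.

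Two small justifications should be tightened.
\begin{itemize}
\item In $G_6$ with $\gamma\ne0$, $a=0$ does not follow from $c=0$. It follows from $a\alpha=a\delta=0$ together with $\alpha+\delta\ne0$, exactly as in your $\gamma=0$ case.
\item In $G_7$, the $\beta$- and $\delta$-coefficients give nothing when $\beta=\delta=0$. Argue instead that $K=0$ gives $a=-c$, and then $c\alpha=c\delta=0$ with $\alpha+\delta\ne0$ gives $c=0$. Also say explicitly that $K=0\Rightarrow\alpha=0$ uses $\alpha\gamma=0$: if $\alpha\ne0$ then $\gamma=0$ and $K=-\alpha^2-\beta^2/2<0$.
\end{itemize}
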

Under our convention, $c<0$, $c=0$, and $c>0$ mean expanding, steady, and shrinking, respectively. Except for the nonunique constant in abelian $G_3$, only expanding or steady solutions occur.

\section{Complete Classification of the General Algebraic Class}
Nondegeneracy of the metric ensures that every bilinear tensor has a unique metric-dual endomorphism $Q$ satisfying $g(QX,Y)=T(X,Y)$. Symmetry is the condition for self-adjointness, not for the existence of $Q$. Here we use the dual of the original $T$, without requiring self-adjointness or symmetrizing the tensor.
\begin{theorem}\label{thm:general}
For the possibly non-self-adjoint endomorphism in \eqref{eq:unified}, the classification of $G_2,G_3,G_5,G_6$ is identical to Theorem \ref{thm:self}. The other families are as follows.
\begin{enumerate}
\item $G_1$: $\beta=0$, $c=-\alpha^2$, and
\[D=\begin{pmatrix}0&0&0\\0&0&\alpha^2\\0&0&\alpha^2\end{pmatrix}.\]
\item $G_4$: exactly three branches:
\[\begin{array}{c|c|c|c}(\alpha,\beta)&K&W&c\\\hline
(\varepsilon,\varepsilon)&0&0&0\\(0,\varepsilon)&-1/2&-\varepsilon/2&-1\\(0,2\varepsilon)&-1&-\varepsilon&-1\end{array}\]
Each has $De_1=(K-c)e_1$, $De_2=(K-c)e_2$, $De_3=We_2-ce_3$.
\item $G_7$: the zero-endomorphism branch in Theorem \ref{thm:self}, and the additional branch $\alpha\ne0$, $\beta=\gamma=0$, $\alpha+\delta\ne0$, $c=-\alpha^2$, with the same matrix $D$ as in the $G_1$ item above.
\end{enumerate}
\end{theorem}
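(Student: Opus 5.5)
The plan is to use the unified form \eqref{eq:unified} and read the derivation condition off $D=Q-c\Id$, where
\[
D=\begin{pmatrix}K-c&0&U\\0&K-c&W\\0&0&-c\end{pmatrix},
\]
so that $De_1=(K-c)e_1$, $De_2=(K-c)e_2$ and $De_3=Ue_1+We_2-ce_3$. For each family I will impose $D[e_i,e_j]=[De_i,e_j]+[e_i,De_j]$ for the three pairs $(1,2),(1,3),(2,3)$ and expand in the basis. This gives nine scalar polynomial equations in the bracket parameters and $c$, once the table values of $K,U,W$ are substituted. For $G_2,G_3,G_5,G_6$ the table gives $U=0$. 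For $G_3$ and $G_5$ it also gives $W=0$, so $Q$ is already self-adjoint and Theorem~\ref{thm:self} applies verbatim. For $G_2$ and $G_6$ I will show directly that the component equations force $W=0$, after which Theorem~\ref{thm:self} again gives the list.

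For $G_2$, the $e_1$-component of the $(2,3)$ equation is $\alpha(K-c)=\alpha(2K-3c)$ up to the $W$ terms. The $e_3$-components involving $\gamma\neq0$ should then force $W\gamma=0$, hence $W=0$. For $G_6$, take the bracket $[e_2,e_3]=0$, so $0=[De_2,e_3]+[e_2,De_3]$. The first term is $(K-c)[e_2,e_3]=0$ and the second is $-c[e_2,e_3]=0$, while $[e_2,We_2]=0$, so this pair gives no information. Instead I will use $(1,3)$. There the $W$ term contributes $W[e_1,e_2]=W(\alpha e_2+\beta e_3)$, and comparing coefficients against $\alpha+\delta\neq0$ should give $W=0$ on every branch, possibly after a short case split on $\alpha$.

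For $G_1$ (where $U=0$), $[e_2,e_3]$ contains the $e_3$ term $\alpha e_3$ with $\alpha\neq0$. The $e_3$-coefficients give linear relations in $c$ and $K$, and the $e_2$-coefficients involve $W\alpha$ and $\beta$. I expect these to reduce to $\beta(\ldots)=0$ together with $c=K-W$-type identities, and to show that $\beta\neq0$ is inconsistent. With $\beta=0$ we get $K=-\alpha^2$, $W=\alpha^2$ and $c=-\alpha^2$, giving the displayed $D$. For $G_4$ the $e_3$-coefficients give $c$ in terms of $K$. The remaining equations should factor in $\alpha$ and $\beta-\varepsilon$, producing the three pairs $(\varepsilon,\varepsilon)$, $(0,\varepsilon)$ and $(0,2\varepsilon)$, with $K,W$ read off the table. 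For $G_7$ both $U$ and $W=-K$ appear. I will split on $\alpha\gamma=0$. If $\alpha=0$, the equations should force $\beta+\gamma=0$, so $K=U=W=0$ and we are back in the self-adjoint branch. If $\alpha\neq0$, so $\gamma=0$, the equations should force $\beta=0$ and $c=-\alpha^2$.

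For each surviving branch I will finish by substituting back and checking all nine equations, which establishes sufficiency. The main obstacle is the $G_7$ case analysis, where $U\neq0$ is a priori possible and the degenerate subcases ($\delta=0$, $\beta=-\gamma$) must be excluded carefully. Completeness of the $G_4$ list is also delicate, since $\varepsilon$ must be tracked through sign-dependent factors. There I would first eliminate $c$ using the $e_3$-coefficients and then factor the remaining resultant.
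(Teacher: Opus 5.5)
Your framework is the paper's: the unique candidate $D=Q-c\Id$, the nine scalar equations from $F_{12},F_{13},F_{23}$, elimination family by family, and back-substitution. Your predicted outcomes for $G_1,G_4,G_7$ are right. However, the two places where you commit to a specific mechanism, $G_2$ and $G_6$, would not work as written.

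For $G_2$, the $e_1$-component of the $(2,3)$ identity is $\alpha(K-c)=\alpha(K-2c)$, i.e.\ just $\alpha c=0$. The $e_3$-components are $\beta(2K-c)=0$ and $\gamma(K-c)+\beta W=0$. None of these gives $\gamma W=0$; for $\beta=0$ they say nothing about $W$ at all. The missing idea is a sum-of-squares elimination:
\begin{itemize}
\item since $\gamma\neq0$, solve $c=K+\beta W/\gamma=-\gamma^2-\alpha\beta$;
\item the $e_2$-component of $(1,3)$, $2\gamma W+\beta c=0$, then becomes $-\alpha(\gamma^2+\beta^2)=0$, so $\alpha=0$;
\item now $\beta(c-2K)=\beta(\gamma^2+\beta^2)=0$ gives $\beta=0$.
\end{itemize}
Only at this point does $W=\gamma(\beta-\alpha)/2$ vanish.

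For $G_6$, the $(1,3)$ pair alone cannot force $W=0$. Its components are $(\delta-\alpha)W+\gamma c=0$ and $\delta(c-K)-\beta W=0$. The choice $(\alpha,\beta,\gamma,\delta)=(1,2,-16,-8)$ with $c=27$ satisfies both, together with $\alpha+\delta\neq0$ and $\alpha\gamma=\beta\delta$, while $K=15$ and $W=-48$. The trace condition enters only when you add the $e_2$-component of $(1,2)$, namely $\alpha(c-K)+\beta W=0$, to the second equation. This gives $(\alpha+\delta)(c-K)=0$, hence $c=K$, leaving $\beta W=\beta K=0$ and $(\delta-\alpha)W+\gamma K=0$. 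The split is then on $\beta$. If $\beta\neq0$, then $W=0$ directly. If $\beta=0$, then $\alpha\gamma=0$ gives either $\gamma=0$, or $\alpha=0$, $\delta\neq0$ and $\delta W=0$.

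The same trace combination gives $c=K$ in $G_7$. There, for $\alpha=0$, the identity $\beta U=\delta K$ forces $K=0$ and then $U=0$. Note that $\beta=-\gamma$ is the surviving branch, not a case to exclude, and $\delta=0$ is allowed when $\alpha\neq0$.

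Finally, forcing $W=0$ already requires essentially the full elimination. The paper in fact obtains Theorem~\ref{thm:self} by specializing this same general computation. So you should simply finish the elimination rather than cite Theorem~\ref{thm:self}, which would be circular within the paper's logic.
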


\section{Step by Step Elimination and All Degenerate Algebraic Branches}
We first solve the general endomorphism equation, then select the self-adjoint subclass using $U=W=0$. All parameters are real. Before dividing by a parameter we distinguish whether it vanishes, so degenerate Lie algebras are also covered. Define the derivation defects
\[F_{ij}=D[e_i,e_j]-[De_i,e_j]-[e_i,De_j],\qquad i<j.\]
Left-invariance and bilinearity make $F_{12}=F_{13}=F_{23}=0$ sufficient; arbitrary vectors need not be checked separately. The equation $Q=c\Id+D$ gives the unique candidate
\[D=\begin{pmatrix}K-c&0&U\\0&K-c&W\\0&0&-c\end{pmatrix}.\]
Thus there is no need to introduce nine independent derivation coefficients.

\subsection{The family G1 and the nonsymmetric component}
Here $K=-\alpha^2-\beta^2$, $U=0$, $W=\alpha^2$, $\alpha\ne0$. Setting all three defects to zero and removing duplicates gives
\[\alpha(c-K)=0,\quad\beta W=0,\quad\beta(c-2K)=0,\quad\alpha(W+c)=0,\quad\beta c=0.\]
\textbf{Step 1. Use necessarily nonzero quantities.} Since $\alpha\ne0$, $c=K$. Since $W=\alpha^2>0$, $\beta W=0$ forces $\beta=0$.

\textbf{Step 2. Determine the constant.} Now $K=-\alpha^2$, $W=\alpha^2$, and $c=-\alpha^2$. The fourth equation becomes $\alpha(\alpha^2-\alpha^2)=0$, and the remaining equations containing $\beta$ hold automatically.

\textbf{Step 3. Verify the derivation directly.} We have
\[De_1=De_2=0,\qquad De_3=\alpha^2(e_2+e_3).\]
The brackets are $[e_1,e_2]=\alpha e_1$, $[e_1,e_3]=-\alpha e_1$, $[e_2,e_3]=\alpha(e_2+e_3)$. Both sides of the first derivation identity vanish. For the second, the right-hand side is $\alpha^2([e_1,e_2]+[e_1,e_3])=0$. For the third, both sides equal $\alpha^3(e_2+e_3)$. This proves sufficiency independently of elimination.

\textbf{Self-adjointness.} As $W=\alpha^2$ never vanishes, the general class has these solutions but the self-adjoint subclass has none.

\subsection{The family G2 and sums of squares}
Here $\gamma\ne0$, $K=-\gamma^2-\beta(\alpha+\beta)/2$, $W=\gamma(\beta-\alpha)/2$. The equations are
\[\gamma(c-K)-\beta W=0,\quad\beta(c-2K)=0,\quad2\gamma W+\beta c=0,\quad\alpha c=0.\]
\textbf{Step 1. Eliminate $c$.} Since $\gamma\ne0$,
\[c=K+\frac{\beta W}{\gamma}=-\gamma^2-\frac{\alpha\beta+\beta^2}{2}+\frac{\beta^2-\alpha\beta}{2}=-\gamma^2-\alpha\beta.\]
\textbf{Step 2. Determine $\alpha$.} The third equation becomes
\[2\gamma W+\beta c=\gamma^2(\beta-\alpha)+\beta(-\gamma^2-\alpha\beta)=-\alpha(\gamma^2+\beta^2)=0.\]
Since $\gamma^2+\beta^2>0$, $\alpha=0$; this resolves the parameter itself rather than leaving an unresolved product.

\textbf{Step 3. Determine $\beta$.} Now $c=-\gamma^2$, $2K=-2\gamma^2-\beta^2$, so
\[\beta(c-2K)=\beta(\gamma^2+\beta^2)=0\quad\Longrightarrow\quad\beta=0.\]
\textbf{Substitution.} Thus $K=-\gamma^2$, $W=0$, $c=K$, $D=\diag(0,0,\gamma^2)$, and $\alpha c=0$ is automatic. The brackets reduce to $[e_1,e_2]=\gamma e_2$, $[e_1,e_3]=-\gamma e_3$, $[e_2,e_3]=0$. Both sides of the second derivation identity equal $-\gamma^3e_3$; both sides of the others vanish. These solutions are automatically self-adjoint, with no additional non-self-adjoint branches.

\subsection{The family G3 and the separate abelian case}
Here $U=W=0$, $K=-\gamma(\alpha+\beta)/2$, and the equations are exactly
\[\gamma(c-2K)=0,\qquad\alpha c=0,\qquad\beta c=0.\]
\textbf{Branch 1: $(\alpha,\beta)\ne(0,0)$.} One of the last two coefficients is nonzero, hence $c=0$. The first equation becomes $-2\gamma K=\gamma^2(\alpha+\beta)=0$. If $\gamma=0$, it holds automatically and $K=0$. If $\gamma\ne0$, divide by $\gamma^2$ to obtain $\alpha+\beta=0$, again with $K=0$. The combined condition is $\gamma(\alpha+\beta)=0$.

\textbf{Branch 2: $\alpha=\beta=0$, $\gamma\ne0$.} Then $K=0$, and $\gamma c=0$ gives $c=0$. This nonabelian Heisenberg-type bracket must not be included in the abelian case.

\textbf{Branch 3: $\alpha=\beta=\gamma=0$.} All brackets vanish and every linear map is a derivation. Any $c\in\mathbb R$, with $D=-c\Id$, satisfies $Q=0=c\Id+D$. All three branches are self-adjoint; in the nonabelian branches the zero map $D=0$ is a derivation.

\subsection{The family G4 and exactly three general solutions}
Put $r=2\varepsilon-\beta$, $\varepsilon^2=1$. Then $K=(\alpha+\beta)r/2-1$, $W=(\alpha-\beta)/2$, and
\[c=K+rW,\quad r(c-2K)=0,\quad2W-\beta c=0,\quad\alpha c=0.\]
\textbf{Step 1. Eliminate $c$ without dividing by $r$.}
\[c=\frac{(\alpha+\beta)r}{2}-1+\frac{(\alpha-\beta)r}{2}=\alpha r-1.\]
\textbf{Step 2. Simplify the third equation.}
\begin{align*}2W-\beta c&=\alpha-\beta-\beta\{\alpha(2\varepsilon-\beta)-1\}\\&=\alpha(1-2\varepsilon\beta+\beta^2)=\alpha(\beta-\varepsilon)^2.\end{align*}
Hence $\alpha=0$ or $\beta=\varepsilon$.

\textbf{Branch 1: $\alpha\ne0$.} Then $\beta=\varepsilon$, and $\alpha c=0$ gives $c=0$. Thus $0=\alpha\varepsilon-1$, so $\alpha=\varepsilon$. We get $K=W=0$, and every equation holds.

\textbf{Branch 2: $\alpha=0$.} Now $c=-1$ and
\[c-2K=-1-\{\beta(2\varepsilon-\beta)-2\}=(\beta-\varepsilon)^2.\]
The second equation becomes $(2\varepsilon-\beta)(\beta-\varepsilon)^2=0$. Its only possibilities are $\beta=\varepsilon$ or $\beta=2\varepsilon$, giving $(K,W,c)=(-1/2,-\varepsilon/2,-1)$ or $(-1,-\varepsilon,-1)$, respectively. Both triples satisfy the first three equations by direct substitution; the fourth holds because $\alpha=0$.

\textbf{Self-adjointness.} Both triples in Branch 2 have nonzero $W$, so only $\alpha=\beta=\varepsilon$ remains. Initial division by $2\varepsilon-\beta$ would incorrectly omit the general solution $\beta=2\varepsilon$.

\subsection{The family G5 and the restriction on c}
Here $K=U=W=0$, so $D=-c\Id$. The derivation identity on any bracket is
\[-c[X,Y]=-2c[X,Y]\quad\Longleftrightarrow\quad c[X,Y]=0.\]
Since $\alpha+\delta\ne0$, at least one of $\alpha,\delta$ is nonzero and the algebra is nonabelian. The equations $c\alpha=c\beta=c\gamma=c\delta=0$ force $c=0$. Conversely, the zero derivation works. This differs from abelian $G_3$, where $c$ is arbitrary although $Q=0$ in both cases.

\subsection{The family G6 and the trace and parameter constraints}
Here
\[K=-\alpha^2-\frac{\beta\gamma}{2},\quad U=0,\quad W=-\frac{\gamma(2\alpha+\delta)}2,\quad\alpha+\delta\ne0,\quad\alpha\gamma=\beta\delta.\]
The four independent equations are
\begin{align*}\alpha(c-K)+\beta W&=0,&\beta(c-2K)&=0,\\(\delta-\alpha)W+\gamma c&=0,&\delta(c-K)-\beta W&=0.\end{align*}
\textbf{Step 1. Use the trace condition.} Adding the first and fourth equations gives $(\alpha+\delta)(c-K)=0$, hence $c=K$, leaving
\[\beta W=0,\qquad\beta K=0,\qquad(\delta-\alpha)W+\gamma K=0.\]
\textbf{Branch 1: $\beta=0$.} The original constraint is $\alpha\gamma=0$. If $\alpha\ne0$, $\gamma=0$, so $W=0$, $K=-\alpha^2$, and the remaining equation is automatic. If $\alpha=0$, $\delta\ne0$, $K=0$, $W=-\gamma\delta/2$, and the final equation gives $\delta W=-\gamma\delta^2/2=0$, again forcing $\gamma=0$. The branch is
\[\beta=\gamma=0,\quad\alpha+\delta\ne0,\quad c=-\alpha^2,\quad D=\diag(0,0,\alpha^2).\]
\textbf{Branch 2: $\beta\ne0$.} The first two equations force $W=K=0$. If $\gamma=0$, then $K=-\alpha^2=0$ gives $\alpha=0$, and $\beta\delta=0$ gives $\delta=0$, contradicting the trace restriction. Hence $\gamma\ne0$. From $W=0$, $\delta=-2\alpha$; the trace condition becomes $-\alpha\ne0$. The original constraint yields
\[\alpha\gamma=\beta(-2\alpha)\quad\Longrightarrow\quad\gamma=-2\beta.\]
Substitution into $K=0$ gives $0=-\alpha^2+\beta^2$, hence $\beta=\pm\alpha$, equivalently $\beta=-s\alpha$, $\gamma=2s\alpha$, $s=\pm1$. Here $c=K=0$, $D=0$, so sufficiency is immediate. In Branch 1, both sides of the identity for $[e_1,e_2]=\alpha e_2$ vanish; for $[e_1,e_3]=\delta e_3$ they equal $\delta\alpha^2e_3$. Both branches have $W=0$, so the general and self-adjoint classes coincide.

\subsection{The family G7 and separation of the alpha branches}
Here $W=-K$, $K=-\alpha^2-\beta(\beta+\gamma)/2$, $U=-\delta(\beta+\gamma)/2$. The last formula already uses $\alpha\gamma=0$. The independent equations are
\begin{align*}\alpha(K-c)-\beta U&=0,&\beta(2K-c)&=0,\\(\delta-\alpha)U+\gamma c&=0,&\delta(c-K)-\beta U&=0.\end{align*}
\textbf{Step 1. Eliminate $c$.} Subtracting the fourth equation from the first gives $(\alpha+\delta)(K-c)=0$, hence $c=K$. The remaining equations are
\[\beta U=0,\qquad\beta K=0,\qquad(\delta-\alpha)U+\gamma K=0.\]
\textbf{Branch 1: $\alpha=0$.} Then $\delta\ne0$ and $\beta U=-\delta\beta(\beta+\gamma)/2=\delta K$. Thus $\beta U=0$ gives $K=0$, and the final equation yields $\delta U=0$, hence $U=0$. Therefore $\beta+\gamma=0$, whether or not $\beta$ vanishes. We obtain $K=U=W=c=0$, and the zero derivation works.

\textbf{Branch 2: $\alpha\ne0$.} The original constraint gives $\gamma=0$, so $K=-\alpha^2-\beta^2/2<0$. The condition $\beta K=0$ forces $\beta=0$. Thus $U=0$, $K=c=-\alpha^2$, $W=\alpha^2$, and the final equation vanishes. The only restriction on $\delta$ is $\alpha+\delta\ne0$. Again $De_1=De_2=0$, $De_3=\alpha^2(e_2+e_3)$. The brackets become $[e_1,e_2]=-\alpha e_1$, $[e_1,e_3]=\alpha e_1$, $[e_2,e_3]=\delta(e_2+e_3)$. The common values of the two sides of the derivation identity are, respectively, $0$, $0$, $\delta\alpha^2(e_2+e_3)$.

\textbf{Self-adjointness.} Branch 2 has $W=\alpha^2\ne0$, so only Branch 1 remains. Every possible zero-parameter branch has now been examined, and every retained branch has been verified by substitution.

\section{Left Invariant Classification of Symmetrized Bott Affine Star Ricci Solitons}
We use the symmetrized tensor
\[
S(X,Y)=\widetilde{\Ric}^{\,*}(X,Y)=\tfrac12\{T(X,Y)+T(Y,X)\},\qquad
[S]=\begin{pmatrix}K&0&U/2\\0&K&W/2\\U/2&W/2&0\end{pmatrix},
\]
and investigate the Bott affine star-Ricci soliton equation
\begin{equation}
g(\nB_XV,Y)+g(X,\nB_YV)+2\widetilde{\Ric}^{\,*}(X,Y)=2\lambda g(X,Y).
\label{eq:affine}
\end{equation}
Here $V=v_1e_1+v_2e_2+v_3e_3$ is left-invariant and $v_i,\lambda$ are real constants. We may also write $\LB_Vg+2S=2\lambda g$, where $\LB$ denotes the Bott affine expression defined above, rather than the ordinary Lie derivative. If the equation is written as $\LB_Vg+2S+2\mu g=0$, then $\mu=-\lambda$.

The original tensor $T$ is not required to be symmetric in this section. The preceding algebraic classifications use the original tensor and have not been replaced by an algebraic soliton problem for $S$. The symmetrized affine problem is a separate third problem.

\begin{theorem}\label{thm:affine}
All left-invariant solutions of \eqref{eq:affine} are listed below, and every solution has $\lambda=0$. Each item retains the original parameter restrictions. Cases not listed admit no solutions.
\begin{enumerate}
\item $G_1$: no solutions.
\item $G_2$: no solutions.
\item $G_3$: $\gamma(\alpha+\beta)=0$ and
\[\alpha v_2=0,\qquad\beta v_1=0,\qquad v_3\in\mathbb R.\]
Thus the $e_3$ component is always arbitrary; an arbitrary $e_1$ component is also allowed if $\beta=0$, and an arbitrary $e_2$ component if $\alpha=0$. In the abelian case, every $V$ is allowed.
\item $G_4$: $\alpha=\beta=\varepsilon$, $V=0$.
\item $G_5$: all admissible parameters, with
\[\begin{pmatrix}\alpha&\gamma\\\beta&\delta\end{pmatrix}\begin{pmatrix}v_1\\v_2\end{pmatrix}=0,\qquad v_3\in\mathbb R.\]
If the determinant is nonzero, $V=v_3e_3$. If it is zero, the horizontal component is arbitrary in the one-dimensional kernel, since $\alpha+\delta\ne0$.
\item $G_6$: exactly two branches:
\begin{enumerate}
\item $\alpha=\beta=0$, $\delta\ne0$, $\gamma\in\mathbb R$, with $V=v_1e_1+v_2e_2$ and $\gamma(2v_1+\delta)=0$;
\item $\alpha\ne0$, $\delta=-2\alpha$, $\gamma=2s\alpha$, $\beta=-s\alpha$, $s=\pm1$, with $V=0$.
\end{enumerate}
In (a), $v_1,v_2$ are arbitrary if $\gamma=0$; if $\gamma\ne0$, $V=-\delta e_1/2+te_2$, $t\in\mathbb R$.
\item $G_7$: exactly two branches:
\begin{enumerate}
\item $\alpha=0$, $\beta\ne0$, $\gamma=-\beta$, $\delta\ne0$, with $V=0$;
\item $\alpha=\beta=0$, $\delta\ne0$, $\gamma\in\mathbb R$, with $V=v_1e_1+t(e_2-e_3)$ and $\gamma(2t+\delta)=0$.
\end{enumerate}
In (b), $v_1$ is arbitrary. If $\gamma=0$, $t$ is arbitrary; if $\gamma\ne0$, $t=-\delta/2$.
\end{enumerate}
\end{theorem}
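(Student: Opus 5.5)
The plan is to convert the affine equation \eqref{eq:affine} into a symmetric $3\times3$ linear system in $(v_1,v_2,v_3,\lambda)$. Because $V$ is left-invariant, $\nB_{e_i}V=\sum_j v_j\nB_{e_i}e_j=A_iv$, where $v=(v_1,v_2,v_3)^{\mathsf T}$. Hence the $(i,j)$ entry of $\LB_Vg$ is
\[g(A_iv,e_j)+g(e_i,A_jv)=\epsilon_j(A_iv)_j+\epsilon_i(A_jv)_i.\]
The Bott rules keep $H$ and $\mathcal V$ separate, so $\nB_{e_i}e_a\in H$ for $a=1,2$ and $\nB_{e_i}e_3\in\mathcal V$. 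Every $A_i$ is therefore block diagonal with blocks of sizes $2+1$. With the matrices $A_i$ from the appendix and $[S]$ from the display above, I will write out the six component equations $E_{ij}$, $i\le j$, for each family:
\[
E_{ij}:\quad (\LB_Vg)_{ij}+2S_{ij}=2\lambda\epsilon_i\delta_{ij}.
\]

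Before splitting into families I will look for general structure. I expect $E_{33}$ to give $\lambda=0$ in all cases. Indeed $S_{33}=0$, and $(\LB_Vg)_{33}=-2(A_3v)_3=-2v_3(A_3)_{33}$ with $(A_3)_{33}=g(\nL_{e_3}e_3,e_3)\cdot(-1)=0$, since $g(\nL_{e_3}e_3,e_3)=\tfrac12e_3g(e_3,e_3)=0$. So $E_{33}$ reduces to $0=-2\lambda$, which is the universal steadiness claim. Next, $E_{11}+E_{22}$ should involve $K$ together with horizontal divergence-type terms, and $E_{13},E_{23}$ involve $U,W$ together with the mixed terms $\epsilon_3(A_1v)_3+(A_3v)_1$. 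The mixed entries of $\LB_Vg$ come only from the bracket-projection rules, so they are linear in $v$ with coefficients equal to structure constants.

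I will then take the families in turn, using the same discipline as in the algebraic elimination: never divide by a parameter without first splitting on whether it vanishes.
\begin{itemize}
\item[] \textbf{$G_1$ and $G_2$.} I expect a contradiction in which some equation forces a positive sum of squares to vanish, such as $\alpha^2$ or $\gamma^2+\beta^2$, exactly as in the algebraic case.
\item[] \textbf{$G_3$ and $G_5$.} The curvature data are nearly trivial. The equations should reduce to $K=0$, i.e.\ $\gamma(\alpha+\beta)=0$, together with linear conditions on $v$ coming from the brackets $[e_1,e_3]$ and $[e_2,e_3]$. For $G_5$ these become the kernel condition of the matrix $\begin{pmatrix}\alpha&\gamma\\\beta&\delta\end{pmatrix}$. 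Since $\alpha+\delta\ne0$, this matrix is never zero, so its kernel has dimension at most one.
\item[] \textbf{$G_4$, $G_6$ and $G_7$.} I will first solve for $v_3$ and the horizontal components using the entries that do not contain $K$. Substituting back into $E_{11},E_{22},E_{12}$ should then give $K=0$ or a relation such as $\gamma(2v_1+\delta)=0$. The trace constraint $\alpha+\delta\ne0$ and the bracket constraints $\alpha\gamma=\beta\delta$ and $\alpha\gamma=0$ will drive the branch splitting, as in the treatment of $G_6$ and $G_7$ above.
\end{itemize}

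For sufficiency, I will substitute each listed branch back into all six $E_{ij}$.

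The main obstacle is the $G_6$ and $G_7$ analysis. There the nonzero $W$ (and $U$) must be cancelled by mixed Lie-derivative terms rather than vanishing on their own. This is what produces branches with $S\ne0$, such as $\alpha=\beta=0$, $\gamma\ne0$, $v_1=-\delta/2$. I will isolate $E_{13}$ and $E_{23}$ first, solve them linearly for $v$, and only afterwards impose the horizontal equations. The goal is to make sure that no degenerate branch is lost, for example the case $\alpha=0$ with $\beta\ne0$ in $G_7$, or the case $\gamma$ arbitrary with $V$ restricted.
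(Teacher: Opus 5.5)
Your plan is correct and follows the paper's proof. The paper likewise writes the six entries of $\mathcal E=\LB_Vg+2S-2\lambda E$ from the appendix matrices and gets $\lambda=0$ from the $(33)$ entry; your uniform justification via $g(\nL_{e_3}e_3,e_3)=0$ is simply a structural reason for the paper's remark that $\nB_{e_3}V\in H$ in every family. It then eliminates family by family with explicit zero/nonzero splits and checks every branch by substitution.

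One caution for $G_1$. Unlike the self-adjoint algebraic case, $W=\alpha^2$ is not itself an obstruction, because the term $-\alpha v_3$ can cancel it in $E_{23}$. The contradiction comes instead from $E_{22}$: since $g(\nB_{e_2}V,e_2)=0$, it reads $2K=-2(\alpha^2+\beta^2)=0$.
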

Every solution is steady. In $G_6$(a) and $G_7$(b), $S\ne0$ when $\gamma\ne0$; these are additional solutions with nonzero curvature tensors admitted by symmetrization. If a definition additionally requires $V\ne0$, simply remove the zero vector fields from the list. The theorem classifies all left-invariant $V$ and makes no claim to solve the partial differential equation for arbitrary, non-left-invariant vector fields.

\section{Componentwise Solution of the Symmetrized Affine Equation}
To avoid confusing the distribution $H$ with the deformation tensor, write $\mathcal H=\LB_Vg$, $S=(T+T^{\mathsf T})/2$, and
\[\mathcal E=\mathcal H+2S-2\lambda E,\qquad E=\diag(1,1,-1).\]
Since $\mathcal E$ is symmetric, only its six upper-triangular entries need vanish. Notice that $2S_{13}=U$ and $2S_{23}=W$, not $2U$ and $2W$. In every family, $\nB_{e_3}V\in H$. Consequently,
\[\mathcal E_{33}=0+0-2\lambda(-1)=2\lambda=0.\]
Thus $\lambda=0$ in all seven families, and every equation below is considered after this reduction.

\subsection{The family G1 and a contradiction from one diagonal entry}
A componentwise calculation gives
\begin{align*}
\nB_{e_1}V&=\alpha v_2e_1-\alpha v_1e_2,\\
\nB_{e_2}V&=\alpha v_3e_3,\\
\nB_{e_3}V&=(\alpha v_1-\beta v_2)e_1+(\beta v_1-\alpha v_2)e_2.
\end{align*}
In particular, $g(\nB_{e_2}V,e_2)=0$, so
\[\mathcal E_{22}=2K=-2(\alpha^2+\beta^2)=0.\]
This would require $\alpha=\beta=0$, contrary to $\alpha\ne0$. There is therefore no need to solve for $v_1,v_2,v_3$: this contradiction proves that the six equations have no common solution. Allowing the original tensor to be nonsymmetric does not remove the diagonal obstruction.

\subsection{The family G2 and a parameter contradiction after solving for V}
The connection gives
\[\nB_{e_1}V=-\gamma v_3e_3,\qquad
\nB_{e_2}V=\gamma v_2e_1-\gamma v_1e_2,\qquad
\nB_{e_3}V=-\alpha v_2e_1+\beta v_1e_2.\]
The remaining five equations are
\begin{align*}
(11):\;&K=0,&(12):\;&\gamma v_2=0,\\
(13):\;&-\alpha v_2+\gamma v_3=0,&(22):\;&K-\gamma v_1=0,\\
(23):\;&\beta v_1+\tfrac12\gamma(\beta-\alpha)=0.
\end{align*}
Since $\gamma\ne0$, (12) yields $v_2=0$, and (13) then gives $v_3=0$. Equations (11) and (22) imply $v_1=0$. Equation (23) now gives $\beta=\alpha$. Finally, (11) becomes $-\gamma^2-\beta^2=0$, which is impossible. One cannot assert $K<0$ at the outset, because $\alpha\beta$ may be negative; the sum-of-squares contradiction becomes valid only after the other equations establish $\alpha=\beta$.

\subsection{The family G3 and the explicit space of vector fields}
Here $A_1=A_2=0$ and $\nB_{e_3}V=-\alpha v_2e_1+\beta v_1e_2$. Therefore
\[\mathcal E=\begin{pmatrix}2K&0&-\alpha v_2\\0&2K&\beta v_1\\-\alpha v_2&\beta v_1&0\end{pmatrix}.\]
It vanishes precisely when $\gamma(\alpha+\beta)=0$, $\alpha v_2=0$, and $\beta v_1=0$. The vector fields can be fully expanded as follows, rather than left implicit in a kernel condition:
\begin{center}\begin{tabular}{@{}lll@{}}\toprule
Parameter branch&Additional condition&$V$\\\midrule
$\alpha\ne0,\ \beta\ne0$&$\gamma=0$ or $\alpha+\beta=0$&$te_3$\\
$\alpha=0,\ \beta\ne0$&$\gamma=0$&$te_2+ue_3$\\
$\alpha\ne0,\ \beta=0$&$\gamma=0$&$te_1+ue_3$\\
$\alpha=\beta=0$&$\gamma$ arbitrary&$te_1+ue_2+we_3$\\\bottomrule
\end{tabular}\end{center}
Here $t,u,w$ are arbitrary real numbers. Each row makes all six entries of $\mathcal E$ vanish by substitution. The final row includes the nonabelian case $\gamma\ne0$, not just the abelian algebra.

\subsection{The family G4 and absence of extra solutions after symmetrization}
The (33) equation already gives $\lambda=0$. The other equations are
\[K=0,\quad-v_2=0,\quad-\alpha v_2-v_3=0,\quad K+v_1=0,\quad\beta v_1+\frac{\alpha-\beta}{2}=0.\]
In order, these give $v_2=v_3=v_1=0$, and then $\alpha=\beta$. Hence
\[K=\beta(2\varepsilon-\beta)-1=-(\beta-\varepsilon)^2=0,\]
so $\alpha=\beta=\varepsilon$. Conversely, these parameters give $K=W=U=0$, and $V=0$ gives $\mathcal H=0$, so the full equation holds.

\subsection{The family G5 and complete parameterization of the two dimensional kernel}
Only $A_3$ may be nonzero, and $S=0$. Thus
\[M\begin{pmatrix}v_1\\v_2\end{pmatrix}=0,\qquad
M=\begin{pmatrix}\alpha&\gamma\\\beta&\delta\end{pmatrix},\qquad v_3\text{ arbitrary}.\]
The original constraint $\alpha\gamma+\beta\delta=0$ says that the two columns of $M$ are orthogonal in the usual Euclidean inner product on $\mathbb R^2$. If $\det M\ne0$, then $v_1=v_2=0$ and $V=te_3$.

If $\det M=0$, the columns are both linearly dependent and orthogonal. If two nonzero vectors are dependent, write $y=kx$; their inner product is $k\|x\|^2$, and orthogonality forces $k=0$. Thus at least one column vanishes. Using $\alpha+\delta\ne0$ leaves exactly two possibilities:
\begin{enumerate}
\item First column zero: $\alpha=\beta=0$, $\delta\ne0$, $\gamma$ arbitrary. The equations are $\gamma v_2=\delta v_2=0$, so $v_2=0$ and $V=te_1+ue_3$.
\item Second column zero: $\gamma=\delta=0$, $\alpha\ne0$, $\beta$ arbitrary. Then $\alpha v_1=\beta v_1=0$, so $v_1=0$ and $V=te_2+ue_3$.
\end{enumerate}
These cases cannot occur simultaneously because that would violate the trace condition. This gives explicit parameterizations of every vector field in the family, rather than an unresolved matrix kernel.

\subsection{The family G6 and the origin of the additional nonzero tensor solutions}
The covariant derivatives are
\[\nB_{e_1}V=\delta v_3e_3,\qquad
\nB_{e_2}V=\alpha v_2e_1-\alpha v_1e_2,\qquad
\nB_{e_3}V=-\gamma v_1e_2.\]
The complete independent conditions are
\begin{equation}
K=0,\quad\alpha v_2=0,\quad\delta v_3=0,\quad\alpha v_1=0,\quad
\gamma v_1=-\frac{\gamma(2\alpha+\delta)}2.
\label{eq:g6aff}
\end{equation}
\textbf{Branch 1: $\gamma=0$.} Then $K=-\alpha^2=0$ gives $\alpha=0$, the trace condition gives $\delta\ne0$, and the original constraint $\alpha\gamma-\beta\delta=0$ gives $\beta=0$. Equation \eqref{eq:g6aff} reduces to $\delta v_3=0$, so $v_3=0$ and $v_1,v_2$ are arbitrary.

\textbf{Branch 2: $\gamma\ne0$.} Dividing the last equation by $\gamma$ gives $v_1=-(2\alpha+\delta)/2$. Then $\alpha v_1=0$ gives $\alpha(2\alpha+\delta)=0$.
\begin{enumerate}
\item If $\alpha=0$, then $\delta\ne0$, and $K=-\beta\gamma/2=0$ gives $\beta=0$. Hence $v_1=-\delta/2$, $v_3=0$, and $v_2=t$ is free. We obtain $V=-\delta e_1/2+te_2$, which is nonzero.
\item If $\alpha\ne0$, then $\delta=-2\alpha\ne0$, so $v_1=v_2=v_3=0$. The original constraint becomes $\alpha(\gamma+2\beta)=0$, giving $\gamma=-2\beta$. Then $K=0$ yields $\beta^2=\alpha^2$, or $\beta=-s\alpha$, $\gamma=2s\alpha$, $s=\pm1$.
\end{enumerate}
\textbf{Full tensor substitution for the additional branch.} When $\alpha=\beta=0$, $\delta\ne0$, $\gamma\ne0$,
\[K=U=0,\quad W=-\gamma\delta/2,\qquad
S=\begin{pmatrix}0&0&0\\0&0&-\gamma\delta/4\\0&-\gamma\delta/4&0\end{pmatrix}.\]
For the above $V$, the only nonzero covariant derivative is $\nB_{e_3}V=\gamma\delta e_2/2$. Thus
\[\mathcal H=\begin{pmatrix}0&0&0\\0&0&\gamma\delta/2\\0&\gamma\delta/2&0\end{pmatrix}=-2S.\]
This explains why $W$ need not vanish: it is exactly canceled by the vector field term.

\subsection{The family G7 and a sum of squares obstruction}
The covariant derivatives are
\begin{align*}
\nB_{e_1}V&=-\alpha v_2e_1+\alpha v_1e_2+\beta v_3e_3,\\
\nB_{e_2}V&=-\beta v_2e_1+\beta v_1e_2+\delta v_3e_3,\\
\nB_{e_3}V&=(-\alpha v_1-\gamma v_2)e_1+(-\beta v_1-\delta v_2)e_2.
\end{align*}
The five equations are
\begin{align*}
K-\alpha v_2&=0,&\alpha v_1-\beta v_2&=0,\\
-\alpha v_1-\beta v_3-\gamma v_2+U&=0,&K+\beta v_1&=0,\\
-\beta v_1-\delta(v_2+v_3)+W&=0.
\end{align*}
\textbf{Exclude $\alpha\ne0$.} The original condition $\alpha\gamma=0$ implies $\gamma=0$, so $K=-\alpha^2-\beta^2/2<0$. Dividing successively by $\alpha$ in the first two equations gives
\[v_2=K/\alpha,\qquad v_1=\beta K/\alpha^2.\]
The fourth equation becomes $K(\alpha^2+\beta^2)/\alpha^2=0$. The first factor is strictly negative and the second strictly positive, so this is impossible. The entire $\alpha\ne0$ branch is excluded.

\textbf{Solve $\alpha=0$.} Then $\delta\ne0$. The first equation gives $K=0$, equivalently $\beta(\beta+\gamma)=0$, and $W=-K=0$. The second and fourth equations yield $\beta v_2=\beta v_1=0$. The fifth reduces to $\delta(v_2+v_3)=0$, so $v_3=-v_2$. The third becomes
\[(\beta-\gamma)v_2-\frac{\delta(\beta+\gamma)}2=0.\]
\begin{enumerate}
\item If $\beta\ne0$, then $K=0$ gives $\gamma=-\beta$. The equations $\beta v_1=\beta v_2=0$ give $v_1=v_2=0$, hence $v_3=0$. The third equation is automatically satisfied, with no additional condition.
\item If $\beta=0$, the third equation is $\gamma(2v_2+\delta)=0$, while $v_1$ is unrestricted. If $\gamma=0$, $v_2=t$ is arbitrary; if $\gamma\ne0$, $v_2=-\delta/2$ and $v_3=\delta/2$.
\end{enumerate}
\textbf{Full tensor substitution for the additional branch.} When $\alpha=\beta=0$, $\delta\ne0$, $\gamma\ne0$, we have $K=W=0$, $U=-\gamma\delta/2$. For $V=v_1e_1-\delta e_2/2+\delta e_3/2$,
\[\nB_{e_1}V=0,\qquad\nB_{e_2}V=\delta^2e_3/2,\qquad
\nB_{e_3}V=\gamma\delta e_1/2+\delta^2e_2/2.\]
Thus $\mathcal H_{13}=\gamma\delta/2=-2S_{13}$. In particular, the (23) component must be checked:
\[\mathcal H_{23}=g(\delta^2e_3/2,e_3)+g(e_2,\delta^2e_2/2)=-\delta^2/2+\delta^2/2=0.\]
The negative sign in the timelike direction is essential; omitting it would incorrectly exclude this branch. All remaining components vanish as well, so $\mathcal H+2S=0$.

\section{Contact Conditions and the Scope of the Results}
For a left-invariant one-form, $d\eta(e_i,e_j)=-\eta([e_i,e_j])$. Consequently,
\[(\eta\wedge d\eta)(e_1,e_2,e_3)=-C_{12}^{3}.\]
The conditions for $\eta$ to be a contact form are
\begin{center}\begin{tabular}{@{}ccccccc@{}}\toprule
$G_1$&$G_2$&$G_3$&$G_4$&$G_5$&$G_6$&$G_7$\\\midrule
$\beta\ne0$&$\beta\ne0$&$\gamma\ne0$&$\beta\ne2\varepsilon$&Never&$\beta\ne0$&$\beta\ne0$\\\bottomrule
\end{tabular}\end{center}
If only the contact-form condition is imposed, one simply intersects this table with the parameter conditions in the main theorems. For example, within the self-adjoint algebraic classification, the contact-form subfamilies are exactly $G_3$ with $\gamma\ne0$, $\alpha+\beta=0$; $G_4$ with $\alpha=\beta=\varepsilon$; $G_6$(b); and $G_7$ with $\alpha=0$, $\gamma=-\beta$, $\delta\ne0$, $\beta\ne0$.

A contact metric structure requires, in addition, that the specified $\xi$ be the Reeb vector field and that $d\eta$ satisfy the chosen compatibility normalization with $\phi,g$. In this basis, $\iota_\xi d\eta=0$ is equivalent to $C_{13}^{3}=C_{23}^{3}=0$. The $G_4,G_6,G_7$ cases in the intersection above therefore fail the Reeb condition for the specified $\xi=e_3$. Under the normalization $d\eta(X,Y)=g(X,\phi Y)$, the only listed solutions satisfying the contact metric condition are those of $G_3$ with $\gamma=-1$, $\alpha+\beta=0$. With the normalization containing a factor of $2$, $\gamma$ becomes $-2$. These are additional restrictions, not default assumptions of the preceding classifications.

\section{Discussion}
The computations show that the non-self-adjoint part of the original star-Ricci endomorphism genuinely enters the derivation condition; its diagonal block alone is insufficient. In the symmetrized affine problem, on the other hand, mixed components can be canceled by $\LB_Vg$, permitting nonzero $S$. These phenomena reflect the different structures of the algebraic derivation equation and the affine tensor equation.

For the fixed structure, the classification exhausts all admissible real parameters and all left-invariant vector fields. Changing the two-dimensional distribution, or allowing vector field coefficients to vary from point to point, produces different equation systems. In particular, the problem for arbitrary vector fields contains derivative terms $e_i(v_j)$, and the finite-dimensional elimination used here does not directly prove a classification in that setting.

\textbf{Reproducibility of the computations.} The appendix gives the complete connection and curvature matrices. Every classification proof is carried out in the main text. Symbolic computation is used only to check matrix multiplication and substitution, not to replace the necessity arguments. The Python program accompanying the source files can recompute all nine tensor components and every retained branch.

\appendix
\section{Bott Connection Matrices and Reproducible Data}
All matrices below use columns to represent images. Together with the stated Lie brackets and \eqref{eq:curvature}, these matrices generate all 27 curvature vectors.

\subsection{The family $G_1$}
\[A_1=\begin{pmatrix}0&\alpha&0\\-\alpha&0&0\\0&0&0\end{pmatrix}.\]
\[A_2=\begin{pmatrix}0&0&0\\0&0&0\\0&0&\alpha\end{pmatrix}.\]
\[A_3=\begin{pmatrix}\alpha&-\beta&0\\\beta&-\alpha&0\\0&0&0\end{pmatrix}.\]
The three independent curvature matrices are
\[R_{12}=\begin{pmatrix}\alpha\beta&-\alpha^2-\beta^2&0\\\alpha^2+\beta^2&-\alpha\beta&0\\0&0&0\end{pmatrix}.\]
\[R_{13}=\begin{pmatrix}0&-\alpha^2&0\\-3\alpha^2&0&0\\0&0&\alpha\beta\end{pmatrix}.\]
\[R_{23}=\begin{pmatrix}-\alpha^2&0&0\\0&\alpha^2&0\\0&0&-\alpha^2\end{pmatrix}.\]

\subsection{The family $G_2$}
\[A_1=\begin{pmatrix}0&0&0\\0&0&0\\0&0&-\gamma\end{pmatrix}.\]
\[A_2=\begin{pmatrix}0&\gamma&0\\-\gamma&0&0\\0&0&0\end{pmatrix}.\]
\[A_3=\begin{pmatrix}0&-\alpha&0\\\beta&0&0\\0&0&0\end{pmatrix}.\]
The three independent curvature matrices are
\[R_{12}=\begin{pmatrix}0&-\alpha\beta-\gamma^2&0\\\beta^2+\gamma^2&0&0\\0&0&0\end{pmatrix}.\]
\[R_{13}=\begin{pmatrix}0&-\gamma(\alpha-\beta)&0\\0&0&0\\0&0&0\end{pmatrix}.\]
\[R_{23}=\begin{pmatrix}-\gamma(\alpha-\beta)&0&0\\0&\gamma(\alpha-\beta)&0\\0&0&\alpha\gamma\end{pmatrix}.\]

\subsection{The family $G_3$}
\[A_1=\begin{pmatrix}0&0&0\\0&0&0\\0&0&0\end{pmatrix}.\]
\[A_2=\begin{pmatrix}0&0&0\\0&0&0\\0&0&0\end{pmatrix}.\]
\[A_3=\begin{pmatrix}0&-\alpha&0\\\beta&0&0\\0&0&0\end{pmatrix}.\]
The three independent curvature matrices are
\[R_{12}=\begin{pmatrix}0&-\alpha\gamma&0\\\beta\gamma&0&0\\0&0&0\end{pmatrix}.\]
\[R_{13}=\begin{pmatrix}0&0&0\\0&0&0\\0&0&0\end{pmatrix}.\]
\[R_{23}=\begin{pmatrix}0&0&0\\0&0&0\\0&0&0\end{pmatrix}.\]

\subsection{The family $G_4$}
\[A_1=\begin{pmatrix}0&0&0\\0&0&0\\0&0&1\end{pmatrix}.\]
\[A_2=\begin{pmatrix}0&-1&0\\1&0&0\\0&0&0\end{pmatrix}.\]
\[A_3=\begin{pmatrix}0&-\alpha&0\\\beta&0&0\\0&0&0\end{pmatrix}.\]
The three independent curvature matrices are
\[R_{12}=\begin{pmatrix}0&-\alpha\beta+2\alpha\varepsilon-1&0\\\beta^2-2\beta\varepsilon+1&0&0\\0&0&0\end{pmatrix}.\]
\[R_{13}=\begin{pmatrix}0&\alpha-\beta&0\\0&0&0\\0&0&0\end{pmatrix}.\]
\[R_{23}=\begin{pmatrix}\alpha-\beta&0&0\\0&-\alpha+\beta&0\\0&0&-\alpha\end{pmatrix}.\]

\subsection{The family $G_5$}
\[A_1=\begin{pmatrix}0&0&0\\0&0&0\\0&0&0\end{pmatrix}.\]
\[A_2=\begin{pmatrix}0&0&0\\0&0&0\\0&0&0\end{pmatrix}.\]
\[A_3=\begin{pmatrix}-\alpha&-\gamma&0\\-\beta&-\delta&0\\0&0&0\end{pmatrix}.\]
The three independent curvature matrices are
\[R_{12}=\begin{pmatrix}0&0&0\\0&0&0\\0&0&0\end{pmatrix}.\]
\[R_{13}=\begin{pmatrix}0&0&0\\0&0&0\\0&0&0\end{pmatrix}.\]
\[R_{23}=\begin{pmatrix}0&0&0\\0&0&0\\0&0&0\end{pmatrix}.\]

\subsection{The family $G_6$}
\[A_1=\begin{pmatrix}0&0&0\\0&0&0\\0&0&\delta\end{pmatrix}.\]
\[A_2=\begin{pmatrix}0&\alpha&0\\-\alpha&0&0\\0&0&0\end{pmatrix}.\]
\[A_3=\begin{pmatrix}0&0&0\\-\gamma&0&0\\0&0&0\end{pmatrix}.\]
The three independent curvature matrices are
\[R_{12}=\begin{pmatrix}0&-\alpha^2&0\\\alpha^2+\beta\gamma&0&0\\0&0&0\end{pmatrix}.\]
\[R_{13}=\begin{pmatrix}0&-\alpha\gamma&0\\\gamma(\alpha+\delta)&0&0\\0&0&0\end{pmatrix}.\]
\[R_{23}=\begin{pmatrix}-\alpha\gamma&0&0\\0&\alpha\gamma&0\\0&0&0\end{pmatrix}.\]

\subsection{The family $G_7$}
\[A_1=\begin{pmatrix}0&-\alpha&0\\\alpha&0&0\\0&0&\beta\end{pmatrix}.\]
\[A_2=\begin{pmatrix}0&-\beta&0\\\beta&0&0\\0&0&\delta\end{pmatrix}.\]
\[A_3=\begin{pmatrix}-\alpha&-\gamma&0\\-\beta&-\delta&0\\0&0&0\end{pmatrix}.\]
The three independent curvature matrices are
\[R_{12}=\begin{pmatrix}-\alpha\beta&-\alpha^2-\beta^2-\beta\gamma&0\\\alpha^2&-\beta\delta&0\\0&0&\beta(\alpha+\delta)\end{pmatrix}.\]
\[R_{13}=\begin{pmatrix}\alpha(2\beta+\gamma)&\alpha\delta+\beta^2+\beta\gamma&0\\-\alpha(2\alpha-\delta)&-\alpha\beta-\alpha\gamma+\beta\delta&0\\0&0&-\beta(\alpha+\delta)\end{pmatrix}.\]
\[R_{23}=\begin{pmatrix}\alpha\delta+\beta^2+\beta\gamma&-\alpha\beta+\alpha\gamma+2\beta\delta+\delta\gamma&0\\-\alpha\beta-\alpha\gamma+\beta\delta&-\beta^2-\beta\gamma+\delta^2&0\\0&0&-\beta\gamma-\delta^2\end{pmatrix}.\]

\end{document}